\theoremstyle{plain}
\newtheorem{theorem}{Theorem}[section]
\newtheorem{proposition}[theorem]{Proposition}
\newtheorem{corollary}[theorem]{Corollary}
\newtheorem{lemma}[theorem]{Lemma}
\theoremstyle{definition}
\newtheorem{example}{Example}[section]
\newtheorem*{remark}{Remark}
\newcommand{\R}{\mathbb{R}}
\newcommand{\N}{\mathbb{N}}
\newcommand{\Z}{\mathbb{Z}}
\newcommand{\Deg}{\operatorname{Deg}}
\newcommand{\eps}{\varepsilon}
\newcommand{\Hidden}[1]{}
\begin{document}

\title{Liouville property and non-negative Ollivier curvature on graphs}
\author{J\"urgen Jost\footnote{MPI Leipzig, jost@mis.mpg.de}~~~~~
~~~~~Florentin M\"unch\footnote{MPI Leipzig, muench@mis.mpg.de}~~~~~Christian Rose\footnote{MPI Leipzig, crose@mis.mpg.de}}
\date{\today}
\maketitle

\begin{abstract}
For graphs with non-negative Ollivier curvature, we prove the Liouville property, i.e., every bounded harmonic function is constant. 
As applications, we apply this result to zero range processes on the line and to lattices with potentials satisfying some convexity condition.
Moreover, we improve Ollivier's results on concentration of the measure under positive Ollivier curvature.
\end{abstract}


\section{Introduction}
Generally, it seems to be very hard to derive analytic or geometric properties from non-negative Ollivier curvature. Indeed, not many results of this kind seem to be known yet.
We prove that graphs with non-negative Ollivier curvature satisfy the Liouville property which seems to be the first analytic result under the assumption of non-negative Ollivier curvature.

In contrast, non-negative Bakry Emery curvature has strong, well known implications on the heat semigroup. In particular, the gradient of a  bounded solution to the heat equation decays like $1/\sqrt{t}$ or faster \cite{lin2015equivalent,gong2015properties,
keller2018gradient}
 which implies Harnack \cite{chung2014harnack} and Buser inequalities \cite{liu2015curvature, 
klartag2015discrete,liu2018eigenvalue,
liu2018buser}, lower diameter bounds in terms of the spectral gap \cite{chung2014harnack}, and the Liouville property \cite{hua2017liouville} which can be proven almost immediately using the gradient decay.
Using a non-linear modification of the Bakry Emery curvature, on can derive even stronger Li-Yau type gradient estimates \cite{munch2014li,bauer2015li,dier2017discrete,
horn2014volume,munch2017remarks}.

To establish this gradient decay under non-negative Ollivier curvature is one of the major open problems in this subject.
Therefore it is an important step in the study of Ollivier curvature to investigate the Liouville property which is closely related to the gradient decay. 

\begin{figure}[H]
\centering
  \includegraphics[width=12cm]{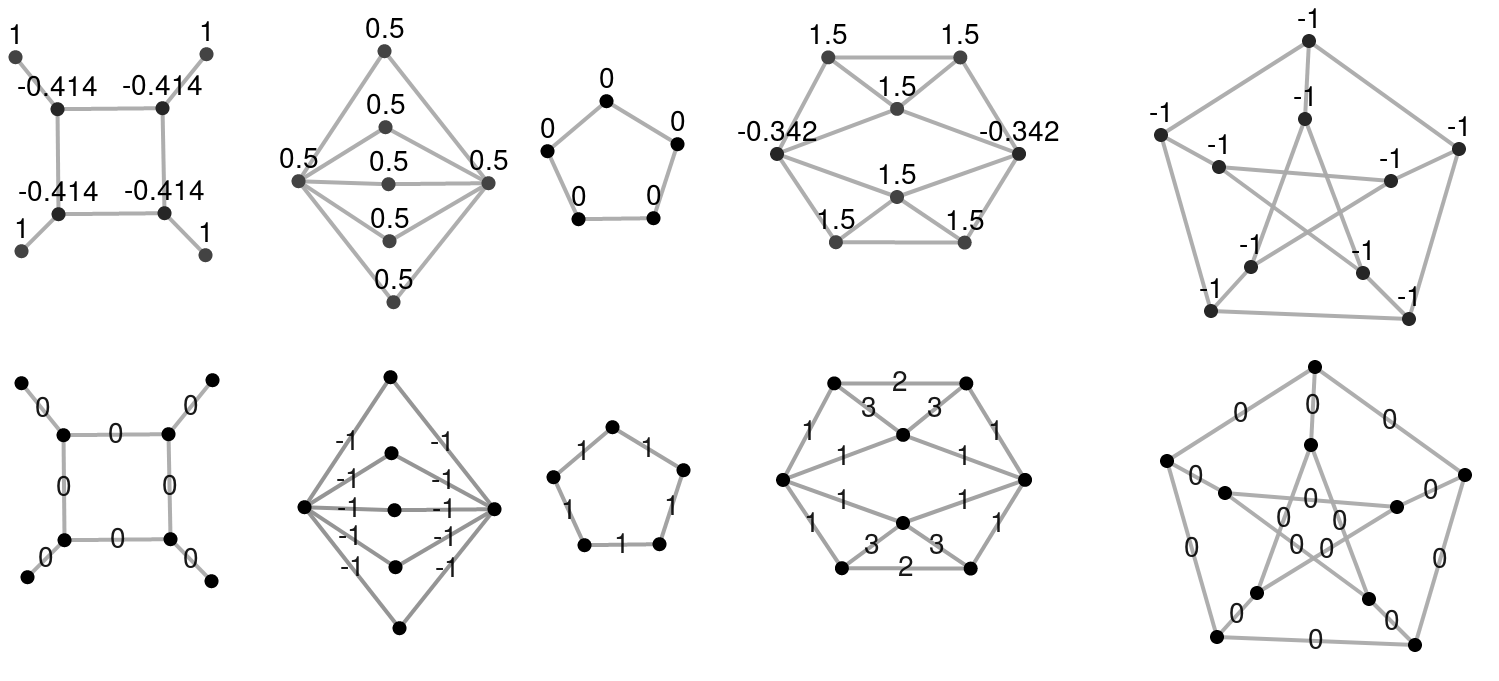}
  \caption{The upper row shows the non-normalized Bakry-Emery curvature. The lower row shows the non-normalized Ollivier curvature from \cite{munch2017ollivier}. The curvature was calculated by the graph curvature calculator \cite{cushing2017graph}.}
  \label{fig:Curv}
\end{figure}

As demonstrated in Figure~\ref{fig:Curv}, there is no implication between non-negative Bakry-Emery and non-negative Ollivier curvature.
We remark that Bakry-Emery and Ollivier curvature are intrinsically different in the sense that Ollivier curvature is defined on edges and Bakry Emery curvature is defined on vertices and can be considered as an analog of the minimal eigenvalue of the Ricci curvature tensor in a point.
Moreover in this paper, we refer to Lin-Lu-Yau's modification of Ollivier curvature which corresponds to lazy random walks and is always larger or equal to Ollivier curvature for non-lazy random walks, see \cite{lin2011ricci}.

Another important question we address in this article is to give interesting examples of infinite graphs with non-negative Ollivier curvature.
Previously known infinite graphs with non-negative Ollivier curvature are certain birth death chains, antitrees \cite{cushing2020curvature} and Cayley graphs for which the Liouville property is either well known, or our theorem is not applicable.
Here, we show that zero range processes on the line, and lattices with suitable potentials also have non-negative Ollivier curvature and therefore satisfy the Liouville property.

Although there many open questions for non-negative Ollivier curvature, the case of positive Ollivier curvature is well understood.
In particular, a positive lower bound on the Ollivier curvature implies an upper diameter bound, eigenvalue estimates, and concentration of the measure \cite{ollivier2009ricci, bauer2011ollivier}.
In this note, we improve the concentration of the measure by applying the methods from \cite{schmuckenschlager1998curvature}.



\subsection{Setup and notation}
A measured and weighted graph $G=(V,w,m)$ is triple consisting of a countable set $V$, a symmetric function $w:V\times V \to [0,\infty)$ which is zero on the diagonal, and a function $m:V\to (0,\infty)$.
We write $x\sim y$ whenever $w(x,y)>0$.
We will always assume local finiteness, i.e., for all $x \in V$,
\[
|\{y: w(x,y)>0\}|<\infty.
\]
We write $q(x,y):=w(x,y)/m(x)$ and define $\Delta: \R^V \to \R^V$ by
\[ \Delta f(x) := \sum_y q(x,y)(f(y)-f(x)).\]
Note that $\Delta \leq 0$, i.e., $\sum_{x} m(x)f(x)\Delta f(x) \leq 0$ for all finitely supported $f:V \to \R$.
We say a function $f \in \R^V$ is \emph{harmonic} if $\Delta f = 0$.
We denote the \emph{weighted vertex degree} of $x\in V$  by $\Deg(x):=\sum_y q(x,y)$, see e.g. \cite[Section~2.2]{huang2013note}. In the Markov chain setting, the weighted vertex degree is usually called jump rate $J(x)$, see e.g. \cite{fathi2015curvature}.
We write $$\Deg_{\max}:=\sup_{x\in V} \Deg(x) \qquad \mbox{ and } \qquad q_{\min}:=\inf_{x\sim y} q(x,y).$$
The \emph{combinatorial graph distance} is given by
\[
d(x,y) := \inf\{n:x=x_0 \sim \ldots \sim x_n=y \}.
\]
Given the graph distance, we define the gradient
$\nabla_{xy} f$ for $f \in \R^V$ and $x \neq y \in V$ via
\[
\nabla_{xy} f := \frac{f(x)-f(y)}{d(x,y)}.
\]
for $f \in \R^V$, we write $\|f\|_\infty := \sup_{x\in V} |f(x)|$ and 
\[
\|\nabla f\|_\infty := \sup_{x\sim y} \nabla_{xy} f.
\]
The Ollivier curvature, also called coarse Ricci curvature, was introduced in \cite{ollivier2007ricci,ollivier2009ricci} for discrete Markov chains.
Modifications have been given defined in \cite{lin2011ricci} and \cite{jost2014ollivier} in order to compute the curvature of random graphs and to relate curvature to the clustering coefficient.
In this article, we use the generalized version of Ollivier curvature from \cite{munch2017ollivier} which is applicable to all weighted graph Laplacians.
By \cite{munch2017ollivier}, the Ollivier curvature $\kappa(x,y)$ for $x\neq y \in V$ is given by
\[
\kappa(x,y) := \inf_{\substack{\nabla_{yx}f=1 \\ \|\nabla f\|_\infty = 1}} \nabla_{xy} \Delta f. 
\]
This definition coincides with the modified curvature introduced by Lin, Lu, Yau \cite{lin2011ricci} whenever the latter is defined, i.e., whenever 
$\Deg \equiv 1$ and $w(x,y) \in \{0,1\}$, see \cite[Theorem~2.1]{munch2017ollivier}.
By \cite[Proposition~2.4]{munch2017ollivier}, the curvature can also be calculated via transport plans.
Connecting the Lipschitz functions to optimal transport plans is a crucial step for proving the Liouville property. Therefore, we recall \cite[Proposition~2.4]{munch2017ollivier}.

\begin{proposition}[{{See \cite[Proposition~2.4]{munch2017ollivier}}}]\label{pro:CharTransport}
Let $G=(V,w,m)$ be a graph and let $x_0 \neq y_0$ be vertices. Then,
\begin{align}
\kappa(x_0,y_0) &= \sup_{\rho}  \sum_{\substack{x \in B_1(x_0) \\ y \in B_1(y_0)}}\rho(x,y) \left[1 - \frac{d(x,y)}{d(x_0,y_0)}\right] \label{eq:PropTransport} 
\end{align}
where the supremum is taken over all $\rho: B_1(x_0) \times B_1(y_0) \to [0,\infty)$ such that
\begin{align}
\sum_{y \in B_1(y_0)} \rho(x,y) &= q(x_0,x)  \qquad \mbox{ for all } x \in S_1(x_0) \mbox{ and} \label{eq:rhoXProp}\\
\sum_{x \in B_1(x_0)} \rho(x,y) &= q(y_0,y) \qquad \mbox{ for all } y \in S_1(y_0)  \label{eq:rhoYProp}.
\end{align}
\end{proposition}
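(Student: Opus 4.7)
My plan is to establish the identity by strong linear programming duality together with a $c$-conjugation of the dual variables. The supremum on the right is a finite linear program in $\rho\ge 0$ with two groups of marginal equality constraints. Introducing Lagrange multipliers $\alpha_x$ for $x\in S_1(x_0)$ and $\beta_y$ for $y\in S_1(y_0)$, its dual minimizes $\sum_x q(x_0,x)\alpha_x+\sum_y q(y_0,y)\beta_y$ subject to $\alpha_x+\beta_y\ge c(x,y):=1-d(x,y)/d(x_0,y_0)$ on $B_1(x_0)\times B_1(y_0)$, with the convention $\alpha_{x_0}=\beta_{y_0}=0$ (since the variables $\rho(x_0,\cdot)$ and $\rho(\cdot,y_0)$ are free in the primal). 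The primal is feasible (take $\rho(x_0,y)=q(y_0,y)$ and $\rho(x,y_0)=q(x_0,x)$, zero otherwise) and bounded above, so strong duality gives primal sup $=$ dual inf, and the task reduces to identifying this common value with $\kappa(x_0,y_0)$.

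The inequality $\kappa(x_0,y_0)\ge$ supremum is weak duality in disguise. Fix a feasible $\rho$ and a $1$-Lipschitz $f$ with $f(y_0)-f(x_0)=d(x_0,y_0)$, use the marginals to replace the weights $q(x_0,\cdot)$ and $q(y_0,\cdot)$ by sums of $\rho$, and rearrange to
\[
\Delta f(x_0)-\Delta f(y_0) = \sum_{(x,y)\in B_1(x_0)\times B_1(y_0)} \rho(x,y)\,C_f(x,y),
\]
where $C_f(x,y)$ takes one of four explicit forms according to whether $x=x_0$ or $y=y_0$. In each of the four cases the global Lipschitz bound $|f(a)-f(b)|\le d(a,b)$ yields $C_f(x,y)\ge d(x_0,y_0)-d(x,y)$, and division by $d(x_0,y_0)$ gives the inequality.

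For the reverse direction, I would lift any dual feasible pair $(\alpha,\beta)$ to a $1$-Lipschitz function $f$ on $V$ whose curvature test value equals the dual objective, via $f(x_0):=0$, $f(y_0):=d(x_0,y_0)$, $f(x):=d(x_0,y_0)\alpha_x$ for $x\in S_1(x_0)$, $f(y):=d(x_0,y_0)(1-\beta_y)$ for $y\in S_1(y_0)$, and then extend to $V$ by McShane's theorem. A direct computation gives $\nabla_{x_0 y_0}\Delta f=\sum_x q(x_0,x)\alpha_x+\sum_y q(y_0,y)\beta_y$, and $\nabla_{y_0 x_0}f=1$ holds by construction. The main obstacle is that the dual constraints alone control $\alpha_x+\beta_y$ from only one side and do not directly force $f$ to be $1$-Lipschitz. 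To remedy this I would first replace $(\alpha,\beta)$ by its $c$-conjugate $(\tilde\alpha,\tilde\beta)$, defined by $\tilde\beta_y:=\max_{x\in B_1(x_0)}(c(x,y)-\alpha_x)$ followed by $\tilde\alpha_x:=\max_{y\in B_1(y_0)}(c(x,y)-\tilde\beta_y)$. One verifies that $(\tilde\alpha,\tilde\beta)$ remains dual feasible, that $\tilde\alpha_{x_0}=\tilde\beta_{y_0}=0$ is preserved (the maxima are attained at $x=x_0$ and $y=y_0$ respectively, by the dual feasibility of $(\alpha,\beta)$), that the dual value does not increase (since $q>0$), and -- crucially -- that $c$-conjugacy produces Lipschitz-type estimates $|\tilde\alpha_x-\tilde\alpha_{x'}|\le d(x,x')/d(x_0,y_0)$, $|\tilde\beta_y-\tilde\beta_{y'}|\le d(y,y')/d(x_0,y_0)$, and $|\tilde\alpha_x|,|\tilde\beta_y|\le 1/d(x_0,y_0)$, together with the complementary upper bound $\tilde\alpha_x+\tilde\beta_y\le 1+d(x,y)/d(x_0,y_0)$ from $c$-conjugacy plus the triangle inequality. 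These bounds combined with the dual inequality $\tilde\alpha_x+\tilde\beta_y\ge c(x,y)$ make the corresponding $f$ $1$-Lipschitz on $B_1(x_0)\cup B_1(y_0)$, and McShane's extension concludes the proof.
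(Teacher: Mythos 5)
The paper does not prove this proposition; it is quoted verbatim from \cite[Proposition~2.4]{munch2017ollivier}, so there is no in-paper argument to compare against. Judged on its own, your proof is correct and follows the standard Kantorovich/LP-duality route that the cited reference also uses: weak duality gives $\kappa(x_0,y_0)\ge\sup_\rho$, and strong duality plus lifting an (improved) dual solution to an admissible Lipschitz function gives the reverse inequality. All the key steps check out: the primal is feasible and bounded (every variable except $\rho(x_0,y_0)$ is capped by a marginal, and $c(x_0,y_0)=0$), the $c$-conjugation preserves feasibility and the normalization $\tilde\alpha_{x_0}=\tilde\beta_{y_0}=0$, does not increase the objective since $\tilde\alpha\le\alpha$, $\tilde\beta\le\beta$ and $q>0$ on the spheres, and the estimates $c(x,y)\le\tilde\alpha_x+\tilde\beta_y\le 1+d(x,y)/d(x_0,y_0)$ together with the one-sided Lipschitz bounds on $\tilde\alpha$, $\tilde\beta$ make the lifted $f$ $1$-Lipschitz, after which McShane extension and the identity $\nabla_{x_0y_0}\Delta f=\sum_x q(x_0,x)\tilde\alpha_x+\sum_y q(y_0,y)\tilde\beta_y$ finish the argument.

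One point you should state explicitly rather than leave implicit: when $d(x_0,y_0)\le 2$ the balls $B_1(x_0)$ and $B_1(y_0)$ may intersect, and your definition of $f$ assigns two a priori different values $d(x_0,y_0)\tilde\alpha_z$ and $d(x_0,y_0)(1-\tilde\beta_z)$ to a common vertex $z$. Your own bounds resolve this --- at $x=y=z$ the constraints squeeze to $\tilde\alpha_z+\tilde\beta_z=c(z,z)=1$, so the two prescriptions agree (in particular $\tilde\alpha_{y_0}=1$ when $x_0\sim y_0$) --- but the well-definedness of $f$ is exactly the place where a naive dual solution without $c$-conjugation would fail, so it deserves a sentence. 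With that verification added, the proof is complete.
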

We remark that $\rho$ is defined on balls, but we only require the coupling property on spheres.  Moreover, we do no not assume that $\rho$ is a probability measure.
A function $\rho$ attaining the supremum in (\ref{eq:PropTransport}) is called \emph{optimal transport plan}. Due to compactness, there always exists an optimal transport plan.

\section{Liouville property and non-negative Ollivier curvature}

The study of harmonic functions and, in particular, the Liouville property on manifolds with non-negative Ricci curvature traces back to \cite{yau1975harmonic} and is still matter of current research \cite{colding2019liouville}.
Liouville type properties on graphs have been studied in e.g. \cite{woess2000random,masamune2009liouville,
benjamini1996harmonic}.
We now present our main theorem stating that every bounded harmonic function is constant when assuming non-negative Ollivier curvature.

\begin{theorem}\label{thm:Liouville}
Let $G=(V,w,m)$ be a graph. Suppose
\begin{itemize}
\item $\Deg_{\max} < \infty$,
\item
$q_{\min}>0$,
\item $\kappa(x,y) \geq 0$ for all $x \neq y \in V$.
\end{itemize}
Then, every bounded harmonic function is constant.
\end{theorem}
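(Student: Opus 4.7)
The plan is to argue by contradiction. Assume $f$ is a bounded harmonic function with $s := \|\nabla f\|_\infty > 0$, and rescale so $s = 1$, making $f$ a $1$-Lipschitz function with respect to the graph distance. The strategy is to produce, from any near-extremal edge, another near-extremal edge one ``step higher'' in $f$, and iterate to build an infinite chain along which $f$ grows roughly linearly, contradicting $\|f\|_\infty < \infty$.

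The key tool is a transport-type identity obtained by combining Proposition~\ref{pro:CharTransport} with harmonicity. For an edge $x_0 \sim y_0$ and any admissible plan $\rho$ with total mass $T := \sum_{x', y'} \rho(x', y')$, I expand $\sum \rho(x', y')\,(f(x') - f(y'))$ using the sphere marginals, absorbing the unconstrained masses at $x_0$ and $y_0$ via $\Delta f(x_0) = \Delta f(y_0) = 0$, to obtain
\[
T \bigl(f(x_0) - f(y_0)\bigr) \;=\; \sum_{x', y'} \rho(x', y')\bigl(f(x') - f(y')\bigr).
\]
Combining this with $f(x') - f(y') \leq d(x', y')$ (Lipschitz) and with $\sum_{x', y'} \rho(x', y')\,(1 - d(x', y')) \geq 0$ (from $\kappa(x_0, y_0) \geq 0$) recovers the Lipschitz bound $f(x_0) - f(y_0) \leq 1$. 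The deficit in these two inequalities then forces an optimal $\rho$---which I may take to satisfy $\rho(x_0, y_0) = 0$, since that pair contributes nothing to the curvature objective---to concentrate on pairs whose gradient nearly saturates the Lipschitz bound.

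In the near-extremal regime $f(x_0) - f(y_0) \geq 1 - \delta$, the total slack is at most $T\delta$, and a pigeonhole argument on the finite set $B_1(x_0) \times B_1(y_0)$ (finite thanks to $\Deg_{\max} < \infty$ and $q_{\min} > 0$) yields a pair $(x', y') \neq (x_0, y_0)$ with $f(x') - f(y') \geq (1 - C\delta)\,d(x', y')$ for a universal constant $C$. Passing to an edge along a shortest path when $d(x', y') \geq 2$ gives a new near-extremal edge $(x_1, y_1)$; choosing the orientation to advance $f$, one arranges $f(x_1) > f(x_0)$. Iterating yields a sequence of edges with gradient $\geq 1 - C^n \delta_0$, producing a chain of length on the order of $\log(1/\delta_0)$ along which $f$ grows linearly. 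Taking $\delta_0$ arbitrarily small (legitimate since $\sup \nabla f = 1$) forces unbounded growth of $f$, contradicting $\|f\|_\infty < \infty$. The main obstacle is verifying the propagation quantitatively: both the controlled degradation $\delta \mapsto C\delta$ and the combinatorial step that the chain genuinely advances---that is, $x_{n+1}$ is a new vertex with larger $f$-value rather than a rotation leaving the upper endpoint fixed---where the strict use of $\kappa \geq 0$ is needed beyond merely the identity above.
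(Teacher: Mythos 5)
Your transport identity for harmonic $f$ is correct, and the resulting deficit bound
$\sum_{x,y}\rho(x,y)\bigl(d(x,y)-(f(x)-f(y))\bigr)\le 2D\delta$ is essentially the same estimate the paper derives (the paper routes it through a truncated $1$-Lipschitz surrogate $g$ with $\nabla_{x_0y_0}g=\|\nabla g\|_\infty=1$ so as to invoke the definition of $\kappa$, but the content is the same). The gap is precisely the step you flag as ``the main obstacle'': nothing in the deficit/pigeonhole argument guarantees that an optimal plan puts any mass on a pair $(x',y')$ with $d(x',y')>d(x_0,y_0)$, and without such a pair the chain cannot advance. All the mass could sit on pairs at distance $\le d(x_0,y_0)$: pairs at distance $0$ have zero deficit and are useless, and pairs at distance $1$ only return another near-extremal edge that may ``rotate'' without increasing $f$. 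This is exactly what happens for the non-lazy walk on $\Z$, where the optimal coupling is a pure translation; the paper explicitly remarks that its Lemma~\ref{lem:transport} fails in that setting. The paper's resolution is that lemma: starting from an arbitrary optimal plan, it reroutes the unconstrained (lazy) mass at $x_0$ and $y_0$ so that a neighbor $x'$ of $x_0$ lying on a geodesic toward $y_0$ is matched only to vertices at distance $<d(x_0,y_0)$ from it, and then $\kappa\ge 0$ forces mass at least $(q_{\min}-\eps)/2$ onto pairs at distance $>d(x_0,y_0)$. That construction is the actual content of the proof and is absent from your proposal.

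A second, related problem is your plan to collapse a far pair back to an edge and demand that $f(x_{n+1})$ exceed $f(x_n)$ by a definite amount. Even granting a mass-carrying pair $(x',y')$ at distance $2$ with deficit $C\delta$, one can have $x'=x_0$ with all the progress on the $y$-side, so the upper endpoint never rises. The paper avoids this by never collapsing: it tracks pairs $(x_n,y_n)$ with $d(x_n,y_n)\ge n+1$ and $f(x_n)-f(y_n)\ge d(x_n,y_n)-\eps\,(10D/q)^n$, and the contradiction comes from $f(x_N)-f(y_N)\ge N$ against $2\|f\|_\infty$, not from monotone growth of $f(x_n)$. Your multiplicative loss $\delta\mapsto C\delta$ and the choice of $\delta_0$ small compared to $C^{-N}$ do match the paper's bookkeeping; what is missing is the advancement mechanism, which is where $q_{\min}>0$ and the lazy mass in the coupling are genuinely used.
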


We remark that the assumption $\kappa(x,y)\geq 0$ is weaker than assuming non-negative Ollivier curvature in the non-lazy random walk setting. 
In order to prove the theorem, we first need a lemma concerning transport plans, stating that if $\kappa(x_0,y_0)\leq \eps$, then there exists an optimal transport plan which transports a significant amount of mass over the distance $d(x_0,y_0)+1$. 

\begin{lemma}\label{lem:transport}
Suppose $d(x_0,y_0)\kappa(x_0,y_0) \leq \eps$ for some $x_0,y_0 \in V$ and some $\eps>0$. 
Then, there exists an optimal transport plan $\rho:B_1(x_0) \times B_1(y_0) \to [0,\infty)$ s.t.
\[
\sum_{\substack{x \in B_1(x_0) \\y \in B_1(y_0)\\d(x,y) > d(x_0,y_0)}} \rho(x,y) \geq (q_{\min} - \eps)/2.
\]
\end{lemma}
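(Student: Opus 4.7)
Write $d := d(x_0,y_0)$. The strategy is to start from any optimal transport plan $\rho^*$ (which exists by compactness, as remarked after Proposition~\ref{pro:CharTransport}) and rearrange it into a still-optimal plan $\rho$ in which a block of mass of size at least $q_{\min}$ is routed over distance exactly $d-1$, along the first edge of a geodesic from $x_0$ to $y_0$. The assumption $d\kappa(x_0,y_0) \leq \eps$ will then force a comparable amount of mass to travel at distance strictly greater than $d$.

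\textbf{Construction.} I would pick $x_1 \in S_1(x_0)$ on a geodesic from $x_0$ to $y_0$, so that $d(x_1,y_0) = d-1$ and $q(x_0,x_1) \geq q_{\min}$, and then modify $\rho^*$ only in the rows indexed by $x_1$ and $x_0$:
\[
\rho(x_1, y_0) := q(x_0, x_1), \qquad \rho(x_1, y) := 0 \text{ for } y \in S_1(y_0),
\]
\[
\rho(x_0, y) := \rho^*(x_0, y) + \rho^*(x_1, y) \text{ for } y \in S_1(y_0),
\]
with $\rho := \rho^*$ on all remaining pairs. Then \eqref{eq:rhoXProp}--\eqref{eq:rhoYProp} still hold: the new $x_1$-marginal equals $q(x_0,x_1)$, and for each $y \in S_1(y_0)$ the sum $\rho(x_0,y) + \rho(x_1,y)$ equals $\rho^*(x_0,y) + \rho^*(x_1,y)$, so the $y$-marginals are unchanged; nonnegativity is clear.

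\textbf{Main obstacle: optimality of $\rho$.} The crux is verifying that $\rho$ still attains the supremum in \eqref{eq:PropTransport}. I would compute the change in the objective as
\[
\sum_{x,y}(\rho - \rho^*)(x,y)\Bigl[1 - \tfrac{d(x,y)}{d}\Bigr] = \frac{1}{d}\sum_{y \in S_1(y_0)} \rho^*(x_1, y)\bigl[1 + d(x_1,y) - d(x_0,y)\bigr].
\]
By the triangle inequality $d(x_0,y) \leq d(x_0,x_1) + d(x_1,y) = 1 + d(x_1,y)$, every bracket is nonnegative, so the change is $\geq 0$. Since $\rho^*$ is a maximizer, the change cannot be positive either, hence it vanishes and $\rho$ is optimal as well.

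\textbf{Extracting the bound.} Writing $m_k := \sum_{d(x,y)=k}\rho(x,y)$, only $k \in \{d-2,\ldots,d+2\}$ contribute (since $|d(x,y) - d|\leq 2$ on $B_1(x_0)\times B_1(y_0)$), and the construction forces $m_{d-1} \geq \rho(x_1,y_0) = q(x_0,x_1) \geq q_{\min}$. Combining this with
\[
\eps \geq d\,\kappa(x_0,y_0) = \sum_k m_k(d-k) = 2m_{d-2} + m_{d-1} - m_{d+1} - 2m_{d+2}
\]
and the trivial bound $m_{d+1} + 2m_{d+2} \leq 2(m_{d+1}+m_{d+2})$ yields $q_{\min} \leq m_{d-1} \leq \eps + 2(m_{d+1} + m_{d+2})$, i.e.\ $\sum_{d(x,y) > d}\rho(x,y) \geq (q_{\min}-\eps)/2$, as required.
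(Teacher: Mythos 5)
Your proof is correct and follows essentially the same strategy as the paper's: modify an optimal plan so that a neighbour of $x_0$ on a geodesic to $y_0$ ships its mass only over distance $<d(x_0,y_0)$, verify optimality of the modified plan via the triangle inequality $d(x_0,y)\le 1+d(x_1,y)$, and then use $d(x_0,y_0)\kappa(x_0,y_0)\le\eps$ to force at least $(q_{\min}-\eps)/2$ of mass at distance $>d(x_0,y_0)$. The only difference is cosmetic: you reroute \emph{all} of $x_1$'s mass to $y_0$, whereas the paper reroutes only the portion going to vertices at distance $\ge d(x_0,y_0)$ from $x_1$; both versions satisfy the marginal conditions (which constrain only the spheres, not $x_0$'s row or $y_0$'s column) and both optimality checks go through.
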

Remarkably, this lemma fails in the non-lazy random walk setting as one can see on the one-dimensional lattice $\Z$ with standard weights.

\begin{proof}
Let $\rho_0$ be an optimal transport plan and
let $x' \sim x_0$  s.t. $d(x',y_0)=d(x_0,y_0)-1$.
We want to construct an optimal transport plan transporting a significant mass over the distance $d(x_0,y_0) + 1$. To this end, we construct an optimal transport plan transporting a significant amount of mass over a distance shorter than $d(x_0,y_0)$ which will be useful since the average transport distance is close to $d(x_0,y_0)$ if the curvature is small. In particular, our transport plan will have the property that $x'$ is transported only to vertices $y \in B_1(y_0)$ with $d(x',y) \leq d(x_0,y_0)-1$.
We define a map $\rho:B_1(x_0) \times B_1(y_0) \to [0,\infty)$ which shall be our new optimal transport plan via
\[
\rho(x,y) := \begin{cases}
\rho_0(x,y_0) + \sum_{\substack{z \in B_1(y_0)\\d(x',z) \geq d(x_0,y_0)}} \rho_0(x',z)
&: x=x',y=y_0,\\
0&: x=x', d(x',y)\geq d(x_0,y_0) ,\\
\rho_0(x_0,y) + \rho_0(x',y) &: x=x_0, d(x',y) \geq d(x_0,y_0),\\
\rho_0(x,y)&: \mbox{otherwise}.
\end{cases}
\]
We now prove that $\rho$ is also an optimal transport plan.
To this end, we first show that $\rho$ satisfies the marginal conditions.
For $x=x'$, we have
\begin{align*}
\sum_{y\in B_1(y_0)} \rho(x',y) &= 
\rho(x',y_0) + \sum_{d(x',y) \geq d(x_0,y_0)} \rho (x',y) + \sum_{\substack{d(x',y) < d(x_0,y_0)\\y\neq y_0}} \rho(x',y)\\
&=\left( \rho_0(x',y_0) + \sum_{d(x',y) \geq d(x_0,y_0)} \rho_0(x',y) \right) + 0 + \sum_{\substack{d(x',y) < d(x_0,y_0)\\y\neq y_0}} \rho_0(x',y)\\
&=\sum_{y\in B_1(y_0)} \rho_0(x',y)\\
&=q(x_0,x').
\end{align*}
For $x \in S_1(x_0)\setminus \{x'\}$, we have
$\rho(x,y) = \rho_0(x,y)$ for all $y \in B_1(y_0)$, and thus,
\[
\sum_{y\in B_1(y_0)} \rho(x,y) = \sum_{y\in B_1(y_0)} \rho_0(x,y) = q(x_0,x).
\]
For $y \in S_1(y_0)$ s.t. $d(x',y) < d(x_0,y_0)$, we have $\rho(x,y) = \rho_0(x,y)$ for all $x \in B_1(x_0)$, and thus,
\[
\sum_{x\in B_1(x_0)} \rho(x,y) = \sum_{x\in B_1(x_0)} \rho_0(x,y) = q(y_0,y).
\]
For $y \in S_1(y_0)$ s.t. $d(x',y) \geq d(x_0,y_0)$, we have
\begin{align*}
\sum_{x\in B_1(x_0)}\rho(x,y) &= \rho(x_0,y)+ \rho(x',y) + \sum_{x \in S_1(x_0)\setminus \{x'\}} \rho(x,y) \\
&=\Big(\rho_0(x_0,y) + \rho_0(x',y) \Big) + 0 + \sum_{x \in S_1(x_0)\setminus \{x'\}} \rho_0(x,y) \\
&=\sum_{x\in B_1(x_0)}\rho_0(x,y) = q(y_0,y).
\end{align*}
This proves that $\rho$ is indeed a transport plan.
In order to show that $\rho$ is optimal, it is sufficient by optimality of $\rho_0$ to show
\[
\sum_{\substack{x\in B_1(x_0)\\y \in B_1(y_0)}} \rho(x,y)\Big(d(x_0,y_0) - d(x,y) \Big) \geq \sum_{\substack{x\in B_1(x_0)\\y \in B_1(y_0)}} \rho_0(x,y)\Big(d(x_0,y_0) - d(x,y) \Big)
\]
We write $C(x,y):= \Big(\rho(x,y)-\rho_0(x,y)\Big)\Big(d(x_0,y_0) - d(x,y) \Big)$.
Thus, it suffices to prove
\[
\sum_{\substack{x\in B_1(x_0)\\y \in B_1(y_0)}} C(x,y) \geq 0.
\]
Since $C(x,y)=0$ whenever $\rho(x,y)=\rho_0(x,y)$, we have
\begin{align*}
\sum_{\substack{x\in B_1(x_0)\\y \in B_1(y_0)}} C(x,y) = C(x',y_0) + \sum_{d(x',y) \geq d(x_0,y_0)} \Big( C(x',y) + C(x_0,y) \Big).
\end{align*}
Observe that since $\rho(x',y)=0$ whenever $d(x',y) \geq d(x_0,y_0)$, we have
\[
C(x',y_0) = \left(\sum_{d(x',y) \geq d(x_0,y_0)} \rho_0(x',y)\right) \cdot \Big(d(x_0,y_0)-d(x',y_0) \Big) = \sum_{d(x',y) \geq d(x_0,y_0)} \rho_0(x',y).
\]
If $d(y,x') \geq d(x_0,y_0)$, we have
\[
C(x',y) = -\rho_0(x',y) \Big(d(x_0,y_0)-d(x',y) \Big)
\]
and
\[
C(x_0,y)= \rho_0(x',y) \Big(d(x_0,y_0) - d(x_0,y)\Big)
\]
and thus,
\[
C(x',y) + C(x_0,y) = \rho_0(x',y)\Big(d(x',y) - d(x_0,y)\Big) \geq - \rho_0(x',y)
\]
since $|d(x',y) - d(x_0,y)| \leq d(x_0,x')=1$.
Putting together gives
\begin{align*}
\sum_{\substack{x\in B_1(x_0)\\y \in B_1(y_0)}} C(x,y) &= C(x',y_0) + \sum_{d(x',y) \geq d(x_0,y_0)} \Big( C(x',y) + C(x_0,y) \Big) \\
&\geq \sum_{d(x',y) \geq d(x_0,y_0)} \rho_0(x',y) - \sum_{d(x',y) \geq d(x_0,y_0)} \rho_0(x',y) = 0
\end{align*}
which proves that $\rho$ is an optimal transport plan.
Observe that via the transport plan $\rho$ the vertex
$x'$ is transported only to vertices $y$ with $d(x',y) < d(x_0,y_0)$,
i.e.,
\[
q(x_0,x')=\sum_y \rho(x',y) = \sum_{d(x',y)<d(x_0,y_0)} \rho(x',y).
\]
Thus, we have
\begin{align*}
\eps \geq d(x_0,y_0)\kappa(x_0,y_0) &=
\sum_{\substack{x \in B_1(x_0) \\y \in B_1(y_0)}} \rho(x,y)(d(x_0,y_0) - d(x,y))
\\&\geq  \sum_{\substack{x \in B_1(x_0) \\y \in B_1(y_0)\\d(x,y) < d(x_0,y_0)}} \rho(x,y) - 2\cdot \sum_{\substack{x \in B_1(x_0) \\y \in B_1(y_0)\\d(x,y) > d(x_0,y_0)}} \rho(x,y) \\
&\geq  \sum_{\substack{y \in B_1(y_0)\\d(x,y) < d(x_0,y_0)}} \rho(x',y) - 2\cdot \sum_{\substack{x \in B_1(x_0) \\y \in B_1(y_0)\\d(x,y) > d(x_0,y_0)}} \rho(x,y) \\
&= q(x_0,x') - 2\cdot \sum_{\substack{x \in B_1(x_0) \\y \in B_1(y_0)\\d(x,y) > d(x_0,y_0)}} \rho(x,y)
\end{align*}
where the second estimate follows from $d(x_0,y_0)-d(x,y) \in \{-2,-1,0,1,2\}$.

Hence,
\[ \sum_{\substack{x \in B_1(x_0) \\y \in B_1(y_0)\\d(x,y) > d(x_0,y_0)}} \rho(x,y)
\geq (q(x_0,x') - \eps)/2 \geq (q_{\min}-\eps)/2.
\]
This finishes the proof.
\end{proof}

For simplicity, we write $D:=\Deg_{\max}$ and $q:= q_{\min}$.
\begin{lemma}\label{lem:inductive}
Let $f$ be a harmonic function with $\|\nabla f\|_\infty=1$.
Let $\eps \in (0,q/4D)$ and let $x_0\neq y_0$ s.t. $f(x_0)-f(y_0)\geq  d(x_0,y_0) - \eps$.
Then, there exists $x' \in B_1(x_0)$ and $y' \in B_1(y_0)$ s.t.
\begin{itemize}
\item 
$f(x')-f(y')> d(x',y') - \eps \cdot 10 D/q$,
\item $d(x',y')>d(x_0,y_0)$.
\end{itemize}
\end{lemma}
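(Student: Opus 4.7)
My plan is to combine the harmonicity of $f$, the 1-Lipschitz estimate, and the transport-plan characterization of Ollivier curvature from Proposition \ref{pro:CharTransport}, and then feed the resulting curvature bound into Lemma \ref{lem:transport}. The key observation is that harmonicity lets one sum the Lipschitz estimate $f(x)-f(y)\leq d(x,y)$ against any transport plan $\rho$: by the identity $\sum_{x}q(x_0,x)f(x)=f(x_0)\Deg(x_0)$ applied to each marginal, one obtains $\sum_{x,y}\rho(x,y)(f(x)-f(y))=M\bigl(f(x_0)-f(y_0)\bigr)$, where $M=\sum_{x,y}\rho(x,y)$ is the total mass. Combined with the hypothesis $f(x_0)-f(y_0)\geq d(x_0,y_0)-\eps$, this shows that the ``slack'' $\sum\rho(x,y)[d(x_0,y_0)-d(x,y)]$ cannot exceed $M\eps$.

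The first step is to bound $M$ uniformly by $2D$. Since mass placed at the pair $(x_0,y_0)$ contributes zero to the curvature supremum in Proposition \ref{pro:CharTransport}, one may restrict the sup to plans with $\rho(x_0,y_0)=0$; for such plans the marginal conditions give $M\leq \Deg(x_0)+\Deg(y_0)\leq 2D$. Taking the supremum in the inequality above therefore produces $d(x_0,y_0)\kappa(x_0,y_0)\leq 2D\eps$, which is exactly the hypothesis required to invoke Lemma \ref{lem:transport} with $\eps'=2D\eps$.

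The second step applies Lemma \ref{lem:transport} to produce an optimal transport plan $\rho$ (again taken with $\rho(x_0,y_0)=0$, which is preserved by the construction in the proof of that lemma) carrying mass at least $(q-2D\eps)/2>q/4$ on pairs with $d(x,y)>d(x_0,y_0)$, where the strict bound uses the hypothesis $\eps<q/(4D)$. Repeating the harmonic calculation above for this optimal $\rho$, and using that $\sum\rho[d(x_0,y_0)-d(x,y)]=d(x_0,y_0)\kappa(x_0,y_0)\geq 0$, yields the termwise non-negative inequality
\[ \sum_{x,y}\rho(x,y)\bigl[d(x,y)-(f(x)-f(y))\bigr]\leq M\eps - d(x_0,y_0)\kappa(x_0,y_0)\leq 2D\eps. \]
Restricting this sum to the pairs with $d(x,y)>d(x_0,y_0)$ and averaging against the mass lower bound produces a pair $(x',y')$ with $d(x',y')>d(x_0,y_0)$ and $d(x',y')-(f(x')-f(y'))\leq 4D\eps/(q-2D\eps)<8D\eps/q<10D\eps/q$, which is the desired strict inequality.

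The main obstacle is the bookkeeping around the reduction $\rho(x_0,y_0)=0$, which is essential both for the uniform bound $M\leq 2D$ and for correctly invoking Lemma \ref{lem:transport}. Inspecting the construction in the proof of that lemma confirms that the $(x_0,y_0)$-entry is never touched, so the optimal plan produced inherits this property from any input $\rho_0$ with $\rho_0(x_0,y_0)=0$. Beyond this, everything reduces to a pigeonhole over the transport plan: if the $\rho$-weighted average slope nearly saturates the Lipschitz bound, some realized pair at the extended distance $d(x',y')>d(x_0,y_0)$ must do so as well.
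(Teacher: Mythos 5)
Your proof is correct, and it takes a genuinely different route from the paper's. Because $\nabla_{x_0y_0}f$ is only close to $1$ rather than exactly $1$, the paper cannot plug $f$ into the Lipschitz-function definition of $\kappa$; it instead builds the auxiliary function $g_0(w)=f(w)\wedge\bigl(f(x_0)-d(x_0,y_0)+d(y_0,w)\bigr)$ on $B_1(x_0)$, extends it minimally to a $1$-Lipschitz function $g$ with $\nabla_{x_0y_0}g=\|\nabla g\|_\infty=1$ and $f-\eps\leq g\leq f$ on $B_1(x_0)$, deduces $d(x_0,y_0)\kappa(x_0,y_0)\leq\Delta g(y_0)-\Delta g(x_0)\leq 2D\eps$, runs the transport estimate for $g$, and finally transfers the conclusion back to $f$ via a one-sided Laplacian comparison at $y_0$. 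You stay entirely on the primal (transport-plan) side: the identity $\sum_{x,y}\rho(x,y)(f(x)-f(y))=M\,(f(x_0)-f(y_0))$ --- which holds for every admissible plan because the marginal conditions on the spheres reproduce the harmonicity relations at $x_0$ and $y_0$, while the unconstrained mass through $x_0$ and $y_0$ only rescales both sides by the same $M$ --- gives the same curvature bound $d(x_0,y_0)\kappa(x_0,y_0)\leq 2D\eps$ once you normalize $\rho(x_0,y_0)=0$ to get $M\leq \Deg(x_0)+\Deg(y_0)\leq 2D$ (a legitimate normalization, and you correctly verify that the construction in Lemma~\ref{lem:transport} never modifies the $(x_0,y_0)$-entry, so the optimal plan it produces inherits this property). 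The termwise nonnegativity of $d(x,y)-(f(x)-f(y))$ then yields the pair $(x',y')$ directly for $f$, with no back-and-forth between $f$ and an auxiliary function; your bound $4D\eps/(q-2D\eps)<8D\eps/q$ is in fact slightly sharper than the stated $10D\eps/q$, and it delivers the strict inequality the lemma claims. The one caveat is shared with the paper rather than introduced by you: both arguments silently use $\kappa(x_0,y_0)\geq 0$ to discard the curvature term in the final estimate, an assumption absent from the lemma's statement but present as the standing hypothesis of Theorem~\ref{thm:Liouville}.
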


\begin{proof}

Define
$g_0:B_1(x_0) \to \R$ via
\[
g_0(w):= f(w) \wedge (f(x_0) - d(x_0,y_0) + d(y_0, w)).
\]
Then, $g_0$ is $1$-Lipschitz as the minimum of two $1$-Lipschitz functions.
Let $g:V \to \R$ the minimal Lipschitz extension of $g_0$ given by
\[
g(z):= \max_{w \in B_1(x_0)} g_0(w)-d(w,z).
\]
For all $z \in V$ and all $w \in B_1(x_0)$, the condition $\|\nabla f \|_\infty=1$ yields
\[
f(z) \geq f(w)-d(z,w) \geq g_0(w) -d(z,w)
\]
which implies $f \geq g$.
Since $g_0$ is $1$-Lipschitz, we have $g_0=g|_{B_1(x_0)}$.
Observe that $g$ is $1$-Lipschitz as a maximum of $1$-Lipschitz functions.
Thus,
$g(y_0) \geq g(x_0)-d(x_0,y_0)$.
On the other hand, we have
\[
g(y_0) \leq \max_{w \in B_1(x_0)} \Big(f(x_0) - d(x_0,y_0) + d(y_0,w)\Big) - d(w,y_0) = f(x_0)-d(x_0,y_0).
\]
Since $f(x_0)=g_0(x_0)=g(x_0)$, we obtain
$g(y_0) \leq f(x_0)-d(x_0,y_0) = g(x_0) - d(x_0,y_0) \leq g(y_0)$ implying $g(y_0) = g(x_0) - d(x_0,y_0)$.
Moreover for $w \in B_1(x_0)$, 
\[
f(x_0) - d(x_0,y_0) + d(y_0,w) \geq f(y_0) - \eps + d(y_0,w) \geq f(w) - \eps
\]
yielding $g(w)=g_0(w) \geq f(w)-\eps$.
Thus,
$\Delta g(x_0) \geq \Delta f(x_0) - D\eps$ by the definition of $\Delta$.
Since $g(y_0) = f(x_0) - d(x_0,y_0) \geq f(y_0) - \eps$ and since $g \leq f$, we have
$\Delta g(y_0) \leq \Delta f(y_0) + D\eps$.
Since $\nabla_{x_0y_0} g=\|\nabla g\|_\infty =1$, the definition of $\kappa$ gives
\[
d(x_0,y_0)\kappa(x_0,y_0) \leq \Delta g(y_0) - \Delta g(x_0) \leq 2D\eps. 
\]
We have $g(y)-g(x) \geq  - d(x,y)$.
Let $$H:= \min_{\substack{x \in B_1(x_0) \\y \in B_1(y_0)\\d(x,y) > d(x_0,y_0)}}  d(x,y) - g(x) + g(y) \geq 0$$
where the set from which the minimum is taken is not empty due to Lemma~\ref{lem:transport} and finite due to local finiteness.
Let $\rho$ be the optimal transport plan from Lemma~\ref{lem:transport}.
We write
\begin{align*}
2D\eps \geq \Delta g(y_0) - \Delta g(x_0) &\geq \sum_{\substack{x \in B_1(x_0) \\y \in B_1(y_0)}} \rho(x,y)(g(y)-g(y_0))  - \sum_{\substack{x \in B_1(x_0) \\y \in B_1(y_0)}} \rho(x,y)(g(x)-g(x_0))\\
&= \sum_{\substack{x \in B_1(x_0) \\y \in B_1(y_0)}} \rho(x,y)(g(y)-g(x) + d(x_0,y_0))\\
& \geq \sum_{\substack{x \in B_1(x_0) \\y \in B_1(y_0)}} \rho(x,y)(d(x_0,y_0)-d(x,y)) +  \sum_{\substack{x \in B_1(x_0) \\y \in B_1(y_0)\\d(x,y) > d(x_0,y_0)}} \rho(x,y)(d(x,y) - g(x) + g(y)) 
\\&\geq d(x_0,y_0)\kappa(x_0,y_0) + H \sum_{\substack{x \in B_1(x_0) \\y \in B_1(y_0)\\d(x,y) > d(x_0,y_0)}} \rho(x,y)\\
&\geq H(q-2D\eps)/2
\end{align*}
where the last estimate follows from Lemma~\ref{lem:transport}.
Hence, $H\leq \frac {4D\eps}{q-2D\eps}$.
In particular, there exists $x' \in B_1(x_0)$ and $y'\in B_1(y_0)$ with $d(x',y') > d(x_0,y_0)$ s.t.
\[
g(x')-g(y') \geq d(x',y') - \frac {4D\eps}{q-2D\eps} \geq d(x',y') - 8D\eps/q
\]
where the last estimate follows from $\eps < \frac{q}{4D}$.
We now show that $g$ approximates $f$ in order to lower bound $f(x')-f(y')$.
We have $g(x') \leq f(x')$ and

\begin{align*}
-D\eps \leq \Delta g(x_0) \leq \Delta g(y_0) = \Delta g(y_0)- \Delta f(y_0) &=
D(f(y_0) - g(y_0)) - \sum_{y\sim y_0} q(y_0,y)(f(y)-g(y)) 
\\&\leq D\eps - q(y_0,y')(f(y')-g(y')).
\end{align*}
Thus,
\[
f(y') \leq g(y') + 2D\eps/q(y_0,y') \leq g(y') + 2D\eps/q.
\]
Putting together gives
\[
f(x') - f(y') \geq g(x') - g(y') -2D\eps/q \geq d(x',y') - \eps \cdot 10D/q.
\]
This finishes the proof.
\end{proof}

We recall $D=\Deg_{\max}$ and $q= q_{\min}$.
\begin{proof}[Proof of Theorem~\ref{thm:Liouville}]
Let $f$ be a bounded harmonic function.
Then, $\|\nabla f\|_\infty < \infty$. 
If $f$ is not constant, we
can assume $\|\nabla f\|_\infty=1$ without obstruction.
Let $2\|f\|_\infty < N \in \N$.
Let $\eps < \left(\frac{q}{10D} \right)^N$ be small.
Let $x_0\sim y_0$ s.t. $\nabla f(x_0,y_0)>1-\eps$.
We inductively apply Lemma~\ref{lem:inductive} to construct sequences $(x_n)_{n=0}^N$ and $(y_n)_{n=0}^N$ with the following properties:
\begin{itemize}
\item $d(x_n,y_n) \geq n+1$,
\item $f(x_n) - f(y_n) \geq d(x_n,y_n) - \eps \cdot (10D/q)^n$.
\end{itemize} 
In particular given $x_n$ and $y_n$, we 
have $d(x_n,y_n) \geq n+1$ and $f(x_n)-f(y_n) \geq d(x_n,y_n) - \eps \cdot (10D/q)^n$ by the induction hypothesis. We now 
apply Lemma~\ref{lem:inductive} to obtain $x',y'$ with $d(x',y') > d(x_n,y_n)$ and 
\[
f(x')-f(y') \geq d(x',y') - \left( \eps \cdot (10D/q)^n\right) \cdot  10D/Q = d(x',y') - \eps \cdot (10D/q)^{n+1}.
\]
Thus we set $x_{n+1}:=x'$ and $y_{n+1}:=y'$ which satisfy the desired properties.
In particular,
\[
\|f\|_\infty + \|f\|_\infty \geq f(x_N) - f(y_N) > d(x_N,y_N) - 1 \geq N + 1 - 1 > 2\|f\|_\infty.
\]
This is a contradiction, and thus, $f$ is constant. This finishes the proof.
\end{proof}

\section{Examples}

Besides abelian Cayley graphs, not many infinite Markov chains are known to have non-negative Ollivier curvature.
One interesting example are so called antitrees, even having positive curvature everywhere \cite{cushing2020curvature}, but not satisfying the $q_{\min}$ assumption from our Liouville property theorem.
Here, we present two important example classes of infinite Markov chains with non-negative Ollivier curvature, namely zero range processes on the line and lattices with potential.

\subsection{Zero range processes on the line}
Zero range processes are important interacting particle systems and were introduced by Spitzer in \cite{spitzer1970interaction}.

The zero range process on the infinite line is given by the following data:
\begin{itemize}
\item $n$ particles
\item Vertex set $V=\{x \in \N_0^\Z: \|x\|_1 = n\}$
\item Rate function $r: \N_0 \to \N_0$ non-decreasing with $r(0)=0$ and $r(1)>0$
\item Jump rate $q(x,x - e_i + e_j) := r(x_i)$ whenever $|i-j|=1$
\item $q(x,y)=0$ otherwise
\end{itemize}
Here, $e_i \in \N_0^\Z$ denotes the unit vector, and $x_i$ the $i$-th component of vertex $x$. It is well known that the corresponding continuous time Markov chain is reversible, and therefore can be expressed as a weighted graph.

\begin{theorem}
Zero range processes on the line have non-negative Ollivier curvature. Moreover, $q_{\min} \geq r(1)$ and $\Deg_{\max} \leq 2nr(n)$.
\end{theorem}

\begin{proof}
For showing non-negative Ollivier curvature, it is sufficient to provide a suitable transprt plan.
Let $x \sim y$ and $z:= x\wedge y$.
Then, $x = z + e_i$ and $y=z+e_j$ for some $i,j \in \Z$ with $|i-j|=1$. We assume without obstruction that $j=i+1$
We now define a transport plan $\rho$ for Proposition~\ref{pro:CharTransport} by

\[
\rho(x',y'):= 
  \begin{cases}
    r(x_i)&: x'=y'=y\\
    r(y_j)&: x'=y'=x \\
    r(z_k) &: x'= x -e_k + e_{k\pm 1}, y' = y -e_k + e_{k\pm 1},  \{x',y'\} \cap \{x,y\}=\emptyset\\
    r(x_i)-r(z_i)&: y'=y, x' =x - e_i + e_{i-1}\\
    r(y_j)-r(z_j)&: x'=x, y' = y - e_j +e_{j+1}\\
    r(x_j)&:y'=y,x' = x - e_j + e_i\\
    r(y_i)&:x'=x, y'=y - e_i + e_j\\ 
  \end{cases}
\]
and zero otherwise.
It is straight forward to check that $\rho$ is a feasible transport plan.
Moreover, the first two cases have transport distance zero, the third case transport distance one, and the other cases transport distance two.
Hence,
\[
\sum_{x',y'}\rho(x',y')(d(x,y)-d(x',y')) = r(x_i) + r(y_j) - (r(x_i) - r(z_i)) - (r(y_j) - r(z_j)) - r(x_j) - r(y_i) = 0
\]
as $z_i = y_i$ and $z_j=x_j$.
By Proposition~\ref{pro:CharTransport}, this implies $\kappa(x,y) \geq 0$.
The 'moreover' assertion $q_{\min} \geq r(1)$ is trivial and the claim $\Deg_{\max} \leq 2nr(n)$ follows as every vertex has at most $2n$ neighbors and as the jump rate is at most $r(n)$. This finishes the proof.
\end{proof}
Applying Theorem~\ref{thm:Liouville} yields the following corollary.
\begin{corollary}
Zero range processes satisfy the Liouville property, i.e., every bounded harmonic function is constant.
\end{corollary}

\subsection{Lattices with potential}

On weighted manifolds with weight function $f$, the generalized Ricci curvature satisfies
$\mathrm{Ric} = \mathrm{Ric}_M + \mathrm{Hess}(f)$ \cite{lott2009ricci}.
Particularly, the Ricci curvature on the Euclidean space with convex weight $f$ is non-negative.
Here, we give a discrete analog.
Let $\Z^n$ be the standard lattice and $e^f : \Z^n\to \R_+$.
The weighted jump rates are given by
\[
q(x,y) = 1_{x\sim y} \exp(f(x)-f(y)).
\]
It is easy to show that the corresponding continuous time Markov chain is reversible, and can therefore be expressed as a weighted graph.
The following proposition gives some kind of convexity condition for non-negative Ollivier curvature on the lattice.

\begin{proposition}\label{pro:LatticePotential}
Let $x \in \Z^n$ and  $y=x+e_i$ for some $i \leq n$.
Assume $q$ on the lattice satisfies
\[
q(x,y) + q(y,x) \geq q(x,x-e_i) + q(y,y+e_i) + \sum_{j \neq i} |q(x,x+e_j) - q(y,y+e_j)| + |q(x,x-e_j) - q(y,y-e_j)|.
\]
Then, the edge $(x,y)$ has non-negative Ollivier curvature.
\end{proposition}

\begin{proof}
Choosing the appropriate transport plan, the left hand side is the mass transported by distance zero, and the right hand side is the mass transported by distance two.
Applying Proposition~\ref{pro:CharTransport} shows non-negativity of the Ollivier curvature of edge $(x,y)$.
\end{proof}

We now apply the above proposition to give explicit examples.

\begin{example}
Let $v \in \R^n$ and $x_0 \in V=\Z^n$. Let $c> 0$. Define
\[
f(x) := c\cdot d(x,x_0) + \langle v,x \rangle.
\]
Then, the corresponding Markov chain defined by $q(x,y) = 1_{x\sim y} \exp(f(x)-f(y))$ has non-negative Ollivier curvature. Moreover, all bounded harmonic functions are constant.
\end{example}

\begin{remark}
The case $c=0$ is also allowed, but then the Markov chain is a Cartesian product of biased birth death chains, and non-negativity of the Ollivier curvature is well known in this case.
\end{remark}

\begin{proof}[Proof of the Example]
We first notice that for $i \neq j \leq n$ and all $x \in Z^n$, we have
\[
f(x) - f(x+e_j) = f(x+e_i) - f(x+e_i + e_j).
\] 
Hence, with $y=x+e_i$,
\[
\sum_{j \neq i} |q(x,x+e_j) - q(y,y+e_j)| + |q(x,x-e_j) - q(y,y-e_j)| = 0.
\]
Moreover, $f$ is convex along $x + \Z e_i$ implying
\[
q(x,y) \geq q(y,y+e_i) \qquad \mbox{ and } \qquad q(y,x) \geq q(x,x-e_i)
\]
Combining everything shows non-negative Ollivier curvature by Proposition~\ref{pro:LatticePotential}.
The conditions $q_{\min}>0$ and $\Deg_{max} < \infty$ follow as $f$ has a bounded gradient. Hence, we can apply Theorem~\ref{thm:Liouville} to show that every harmonic function is constant.
This finishes the proof.
\end{proof}

\section{Concentration of measure}
We apply the methods from \cite{schmuckenschlager1998curvature} to improve the concentration of measure results by Ollivier \cite{ollivier2009ricci}.
In \cite{schmuckenschlager1998curvature}, concentration of measure is proved under a positive Bakry Emery curvature bound.
For $$f\in \ell^1(V,m) := \{g \in C(V): \sum_{x \in V} m(x)|g(x)|<\infty\}$$ we write
\[
\langle g \rangle := \sum_{x \in V} m(x)g(x).
\]
We now state our concentration theorem which gives a Gaussian upper bound for the measure of the vertices for which a Lipschitz function deviates from its mean more than $r$.
Non-explicit concentration bounds via transport-information inequalities in terms of Ollivier curvature can also be found in \cite{fathi2015curvature}. 
\begin{theorem}
Let $G=(V,w,m)$ be a graph and let $K>0$. Suppose
\begin{itemize}
\item $\Deg_{\max} \leq 1$,
\item $m(V)=1$,
\item $\kappa(x,y) \geq K>0$ for all $x \neq y \in V$.
\end{itemize}
Let $f \in \ell^1(V,m)$ s.t.
\begin{itemize}
\item $\langle f\rangle = 0$,
\item $\|\nabla f\|_\infty \leq 1$.
\end{itemize}
Then,
\[
m(f>r) \leq  e^{-{Kr^2}}.
\]

\end{theorem}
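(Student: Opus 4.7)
My plan is to follow a Herbst-type moment-generating-function argument with a semigroup interpolation in the spirit of Schmuckenschläger. Let $P_t := e^{t\Delta}$ denote the heat semigroup generated by $\Delta$. For $\lambda>0$, set $\Lambda(t) := \log\langle e^{\lambda P_t f}\rangle$ and aim at the estimate $\Lambda(0) \le \lambda^2/(4K)$; combining this with Markov's inequality $m(f>r) \le e^{-\lambda r}\langle e^{\lambda f}\rangle$ and optimising at $\lambda = 2Kr$ will yield the claimed bound $e^{-Kr^2}$.

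Two ingredients drive the argument. First, Proposition~\ref{pro:CharTransport} together with Kantorovich duality upgrades the edgewise bound $\kappa\ge K$ to the Wasserstein contraction $W_1(P_t\delta_x,P_t\delta_y)\le e^{-Kt}d(x,y)$, and hence to $\|\nabla P_t f\|_\infty \le e^{-Kt}\|\nabla f\|_\infty \le e^{-Kt}$; in particular $|P_t f(y)-P_t f(x)|\le e^{-Kt}$ for every edge $x\sim y$. Second, I will use the Hermite--Hadamard inequality for $t\mapsto e^t$,
\[
(e^a-e^b)(a-b) \;\le\; \frac{(a-b)^2}{2}\bigl(e^a+e^b\bigr),
\]
which serves as the discrete substitute for the smooth chain rule $\Delta(e^g) = e^g\Delta g + e^g|\nabla g|^2$. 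I will also need the ergodic limit $P_t f\to\langle f\rangle=0$, which holds since positive Ollivier curvature together with $\Deg_{\max}<\infty$ forces a bounded-diameter and hence finite graph.

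For the main estimate, write $u:=P_t f$ and $Z(t):=\langle e^{\lambda u}\rangle$. Reversibility of $m$ (via $m(x)q(x,y)=w(x,y)=w(y,x)$) yields
\[
\langle e^{\lambda u}\Delta u\rangle = -\frac{1}{2}\sum_{x,y} w(x,y)\bigl(e^{\lambda u(y)}-e^{\lambda u(x)}\bigr)\bigl(u(y)-u(x)\bigr),
\]
and differentiating under the sum gives $\Lambda'(t) = \lambda\langle e^{\lambda u}\Delta u\rangle/Z(t)$. Applying Hermite--Hadamard with $a=\lambda u(y)$, $b=\lambda u(x)$, then using $(u(y)-u(x))^2\le e^{-2Kt}$ for $x\sim y$ together with
\[
\sum_{x,y} w(x,y)\,\frac{e^{\lambda u(x)}+e^{\lambda u(y)}}{2} = \sum_x m(x)\Deg(x)\,e^{\lambda u(x)} \;\le\; \D\cdot Z(t) \;\le\; Z(t),
\]
one obtains $-\Lambda'(t)\le \tfrac{1}{2}\lambda^2 e^{-2Kt}$.

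Integrating over $[0,\infty)$ and invoking $\Lambda(\infty)=0$ gives $\Lambda(0)\le\lambda^2/(4K)$, hence $\langle e^{\lambda f}\rangle\le e^{\lambda^2/(4K)}$. Markov's inequality with $\lambda=2Kr$ then yields $m(f>r)\le e^{-Kr^2}$. The hardest step is transferring the infinitesimal, edge-level definition of $\kappa$ from Proposition~\ref{pro:CharTransport} into the time-integrated gradient contraction of $P_t$: this is the standard link between the transport formulation of curvature and Lipschitz contraction of the semigroup, but it requires some care regarding differentiability of $P_t f$ in $t$ and exchange of sum and derivative. The ergodic convergence $P_t f\to \langle f\rangle$ is then routine on the resulting finite graph.
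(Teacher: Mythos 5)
Your proposal is correct and follows essentially the same route as the paper: a Herbst/Schmuckenschl\"ager semigroup argument combining the gradient contraction $\|\nabla P_t f\|_\infty \le e^{-Kt}$ with a convexity (Hermite--Hadamard) bound playing the role of the chain rule, integration of the resulting differential inequality for $\langle e^{\lambda P_t f}\rangle$, and Markov's inequality optimised at $\lambda = 2Kr$. The only cosmetic differences are that you derive the Lipschitz contraction from the transport characterisation where the paper cites \cite[Theorem~3.8]{munch2017ollivier}, and you make explicit the symmetrisation step that the paper compresses into the inequality $\langle \Gamma(\lambda P_t f, e^{\lambda P_t f})\rangle \le \lambda^2\langle e^{\lambda P_t f}\,\Gamma(P_t f)\rangle$.
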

This improves the concentration result by Ollivier \cite[Theorem~33]{ollivier2009ricci} roughly stating
that under the same assumptions as in the theorem,
\[
m(f>r) \leq  e^{-{K^2 r^2}}
\]
if $r$ is not too large.
Firstly, having $K$ in the exponent is better than $K^2$ up to a constant since $K\leq 2$ due to $\Deg_{\max} \leq 1$.
Secondly, our concentration result holds without restricting to small enough $r$.

\begin{proof}
We first observe that $G$ has finite diameter due to  \cite[Proposition~4.14]{munch2017ollivier}.
Thus, $G$ is finite due to local finiteness.
Following \cite{schmuckenschlager1998curvature}, we have
\begin{align*}
-\partial_t \langle e^{\lambda P_t f} \rangle = -
\langle\lambda \Delta P_t f,  e^{\lambda P_t f} \rangle =  \langle \Gamma(\lambda P_t f,e^{\lambda P_t f})\rangle \leq \lambda^2 \langle e^{\lambda P_t f}, \Gamma(P_t f) \rangle
\end{align*}
where $2\Gamma(g,h):= \Delta(fg)-f\Delta g - g \Delta f$.
Moreover by \cite[Theorem~3.8]{munch2017ollivier}, and since $\Deg_{\max} \leq 1$, 
\[
2\Gamma(P_t f) \leq \|\nabla P_t f\|_\infty^2 \leq  e^{-2Kt} \|\nabla f\|_\infty^2 \leq e^{-2Kt}.
\]
Putting together gives
\[
-\partial_t \langle e^{\lambda P_t f} \rangle \leq \frac{\lambda^2 } 2 e^{-2Kt} \langle e^{\lambda P_t f}\rangle.
\]
Integrating from $t=0$ to $\infty$ and applying $\lim_{t\to \infty} e^{\lambda P_t f} = 1$ gives
\[
\langle e^{\lambda  f} \rangle \leq e^{\frac{\lambda^2 }{4K}}.
\]
Thus, we have
\[
m(f>r) \leq e^{-\lambda r}\langle e^{\lambda f} \rangle \leq \exp\left( \frac{\lambda^2 }{4K} - \lambda r \right) = \exp \left(-{Kr^2} \right)
\]
when choosing $\lambda = 2rK$.
This finishes the proof.
\end{proof}



\bibliographystyle{alpha}


\begin{thebibliography}{}

\end{thebibliography}


\begin{thebibliography}{HKMW13}

\bibitem[BHL{\etalchar{+}}15]{bauer2015li}
Frank Bauer, Paul Horn, Yong Lin, Gabor Lippner, Dan Mangoubi, Shing-Tung Yau,
  et~al.
\newblock Li-{Y}au inequality on graphs.
\newblock {\em Journal of Differential Geometry}, 99(3):359--405, 2015.

\bibitem[BJL12]{bauer2011ollivier}
Frank Bauer, J\"urgen Jost, and Shiping Liu.
\newblock Ollivier-{R}icci curvature and the spectrum of the normalized graph
  {L}aplace operator.
\newblock {\em Mathematical Research Letters}, 19(6):1185--1205, 2012.

\bibitem[BS96]{benjamini1996harmonic}
Itai Benjamini and Oded Schramm.
\newblock {Harmonic functions on planar and almost planar graphs and manifolds,
  via circle packings}.
\newblock {\em Inventiones mathematicae}, 126(3):565--587, 1996.

\bibitem[CKL{\etalchar{+}}17]{cushing2017graph}
David Cushing, Riikka Kangaslampi, Valtteri Lipi{\"a}inen, Shiping Liu, and
  George~William Stagg.
\newblock {The {G}raph {C}urvature {C}alculator and the curvatures of cubic graphs}.
\newblock {\em arXiv preprint arXiv:1712.03033}, 2017.



\bibitem[CLM{\etalchar{+}}17]{cushing2020curvature}
David Cushing, Shiping Liu, Florentin M{\"u}nch, and Norbert Peyerimhoff.
\newblock {Curvature calculations for antitrees}.
\newblock {\em Analysis and geometry on graphs and manifolds, London Mathematical Society Lecture Notes Series}, 2020.




\bibitem[CLY14]{chung2014harnack}
Fan Chung, Yong Lin, and S-T Yau.
\newblock {Harnack inequalities for graphs with non-negative {R}icci curvature}.
\newblock {\em Journal of Mathematical Analysis and Applications},
  415(1):25--32, 2014.

\bibitem[CMW19]{colding2019liouville}
Tobias~Holck Colding, II~Minicozzi, and P~William.
\newblock Liouville properties.
\newblock {\em arXiv preprint arXiv:1902.09366}, 2019.

\bibitem[DKZ17]{dier2017discrete}
Dominik Dier, Moritz Kassmann, and Rico Zacher.
\newblock {Discrete versions of the {L}i-{Y}au gradient estimate}.
\newblock {\em arXiv preprint arXiv:1701.04807}, 2017.

\bibitem[FS18]{fathi2015curvature}
Max Fathi and Yan Shu.
\newblock Curvature and transport inequalities for {M}arkov chains in discrete
  spaces.
\newblock {\em Bernoulli}, 24(1):672--698, 2018.

\bibitem[GL17]{gong2015properties}
Chao Gong and Yong Lin.
\newblock Equivalent properties for {CD} inequalities on graphs with unbounded
  {L}aplacians.
\newblock {\em Chin. Ann. Math. Ser. B}, 38(5):1059--1070, 2017.

\bibitem[HKMW13]{huang2013note}
Xueping Huang, Matthias Keller, Jun Masamune, and Rados{\l}aw~K. Wojciechowski.
\newblock A note on self-adjoint extensions of the {L}aplacian on weighted
  graphs.
\newblock {\em Journal of Functional Analysis}, 265(8):1556--1578, 2013.

\bibitem[HLLY17]{horn2014volume}
Paul Horn, Yong Lin, Shuang Liu, and Shing-Tung Yau.
\newblock {Volume doubling, {P}oincar{\'e} inequality and {G}aussian heat kernel
  estimate for non-negatively curved graphs}.
\newblock {\em Journal f{\"u}r die reine und angewandte Mathematik (Crelles
  Journal)}, 2017.

\bibitem[Hua17]{hua2017liouville}
Bobo Hua.
\newblock Liouville theorem for bounded harmonic functions on graphs satisfying
  non-negative curvature dimension condition.
\newblock {\em arXiv preprint arXiv:1702.00961}, 2017.

\bibitem[JL14]{jost2014ollivier}
J{\"u}rgen Jost and Shiping Liu.
\newblock Ollivier's {R}icci curvature, local clustering and curvature-dimension
  inequalities on graphs.
\newblock {\em Discrete \& Computational Geometry}, 51(2):300--322, 2014.

\bibitem[KKRT16]{klartag2015discrete}
Bo'az Klartag, Gady Kozma, Peter Ralli, and Prasad Tetali.
\newblock Discrete curvature and abelian groups.
\newblock {\em Canad. J. Math.}, 68(3):655--674, 2016.

\bibitem[KM18]{keller2018gradient}
Matthias Keller and Florentin M{\"u}nch.
\newblock Gradient estimates, {B}akry-{E}mery {R}icci curvature and ellipticity for
  unbounded graph {L}aplacians.
\newblock {\em arXiv preprint arXiv:1807.10181}, 2018.

\bibitem[Liu18]{liu2018buser}
Shuang Liu.
\newblock Buser's inequality on infinite graphs.
\newblock {\em arXiv preprint arXiv:1810.12003}, 2018.

\bibitem[LL15]{lin2015equivalent}
Yong Lin and Shuang Liu.
\newblock {Equivalent properties of {CD} inequality on graph}.
\newblock {\em arXiv preprint arXiv:1512.02677}, 2015.

\bibitem[LLY11]{lin2011ricci}
Yong Lin, Linyuan Lu, and Shing-Tung Yau.
\newblock Ricci curvature of graphs.
\newblock {\em Tohoku Mathematical Journal, Second Series}, 63(4):605--627,
  2011.

\bibitem[LMP15]{liu2015curvature}
Shiping Liu, Florentin M{\"u}nch, and Norbert Peyerimhoff.
\newblock Curvature and higher order {B}user inequalities for the graph
  connection {L}aplacian.
\newblock {\em arXiv preprint arXiv:1512.08134}, 2015.

\bibitem[LP18]{liu2018eigenvalue}
Shiping Liu and Norbert Peyerimhoff.
\newblock {Eigenvalue Ratios of Non-Negatively Curved Graphs}.
\newblock {\em Combinatorics, {P}robability and {C}omputing}, pages 1--22, 2018.





\bibitem[LV09]{lott2009ricci}
John Lott and Cedric Villani.
\newblock {Ricci curvature for metric-measure spaces via optimal transport}.
\newblock {\em Annals of {M}athematics}, pages 903--991, 2009.









\bibitem[Mas09]{masamune2009liouville}
Jun Masamune.
\newblock A {L}iouville property and its application to the {L}aplacian of an
  infinite graph.
\newblock {\em Contemporary Mathematics}, 484:103, 2009.

\bibitem[M{\"u}n17]{munch2017remarks}
Florentin M{\"u}nch.
\newblock Remarks on curvature dimension conditions on graphs.
\newblock {\em Calculus of Variations and Partial Differential Equations},
  56(1):11, 2017.

\bibitem[M{\"u}n18]{munch2014li}
Florentin M{\"u}nch.
\newblock {Li-{Y}au inequality on finite graphs via non-linear curvature
  dimension conditions}.
\newblock {\em Journal de Math{\'e}matiques Pures et Appliqu{\'e}es},
  120:130--164, 2018.

\bibitem[MW17]{munch2017ollivier}
Florentin M{\"u}nch and Radoslaw~K Wojciechowski.
\newblock {Ollivier {R}icci curvature for general graph {L}aplacians: {H}eat
  equation, {L}aplacian comparison, non-explosion and diameter bounds}.
\newblock {\em arXiv preprint arXiv:1712.00875}, 2017.

\bibitem[Oll07]{ollivier2007ricci}
Yann Ollivier.
\newblock {R}icci curvature of metric spaces.
\newblock {\em Comptes Rendus Mathematique}, 345(11):643--646, 2007.

\bibitem[Oll09]{ollivier2009ricci}
Yann Ollivier.
\newblock {R}icci curvature of {M}arkov chains on metric spaces.
\newblock {\em Journal of Functional Analysis}, 256(3):810--864, 2009.

\bibitem[Sch98]{schmuckenschlager1998curvature}
Michael Schmuckenschl{\"a}ger.
\newblock {C}urvature of nonlocal {M}arkov generators.
\newblock {\em Convex geometric analysis (Berkeley, CA, 1996)}, 34:189--197,
  1998.


\bibitem[Spi91]{spitzer1970interaction}
Frank Spitzer.
\newblock {I}nteraction of {M}arkov processes.
\newblock {\em Advances in Mathematics}, 5:246--290,
  1970.



\bibitem[Woe00]{woess2000random}
Wolfgang Woess.
\newblock {\em {R}andom walks on infinite graphs and groups}, volume 138.
\newblock Cambridge university press, 2000.

\bibitem[Yau75]{yau1975harmonic}
Shing-Tung Yau.
\newblock {H}armonic functions on complete {R}iemannian manifolds.
\newblock {\em Communications on Pure and Applied Mathematics}, 28(2):201--228,
  1975.

\end{thebibliography}
\newcommand{\etalchar}[1]{$^{#1}$}

\end{document}